\DeclareMathOperator{\Prod}{Prod}
\DeclareMathOperator{\Aut}{Aut}
\DeclareMathOperator{\Rep}{Rep}
\DeclareMathOperator{\Diag}{Diag}
\DeclareMathOperator{\All}{All}
\DeclareMathOperator{\Num}{Num}
\DeclareMathOperator{\Inj}{Inj}
\DeclareMathOperator{\Sym}{Sym}
\renewcommand{\leq}{\leqslant}
\renewcommand{\geq}{\geqslant}
\newcommand{\J}{\mathcal J}
\theoremstyle{plain}
\newtheorem{lemma}{Lemma}
\newtheorem{theorem}[lemma]{Theorem}
\newtheorem{proposition}[lemma]{Proposition}
\theoremstyle{definition}
\newtheorem{definition}[lemma]{Definition}
\newtheorem{example}[lemma]{Example}
\newtheorem{question}[lemma]{Question}
\numberwithin{equation}{section}
\numberwithin{lemma}{section}
\begin{document}

\title{Alphabet-Almost-Simple $2$-Neighbour Transitive Codes}
 \author{Neil I Gillespie and Daniel R Hawtin}
 
 \address{[Gillespie] Heilbronn Institute for Mathematical Research, School of Mathematics, Howard House, University of Bristol, BS8 1SN, United Kingdom.\newline
 \indent[Hawtin] Centre for the Mathematics of Symmetry and Computation, University of Western Australia, 35 Stirling Highway, Crawley, WA 6009, Australia.}

\email{neil.gillespie@bristol.ac.uk \\ daniel.hawtin@research.uwa.edu.au}

\date{\today}

\thanks{
{\it 2000 Mathematics Subject Classification:} 05E20, 68R05, 20B25.\\
{\it $2$-neighbour transitive \quad alphabet almost simple \quad automorphism groups \quad Hamming graph \quad completely transitive}\\
The second author is supported by an Australian Postgraduate Award and UWA Top-up Scholarship.}

\begin{abstract}
 Let $X$ be a subgroup of the full automorphism group of the Hamming graph $H(m,q)$, and $C$ a subset of the vertices of the Hamming graph. We say that $C$ is an \emph{$(X,2)$-neighbour transitive code} if $X$ is transitive on $C$, as well as $C_1$ and $C_2$, the sets of vertices which are distance $1$ and $2$ from the code. This paper begins the classification of $(X,2)$-neighbour transitive codes where the action of $X$ on the entries of the Hamming graph has a non-trivial kernel. There exists a subgroup of $X$ with a $2$-transitive action on the alphabet; this action is thus almost-simple or affine. If this $2$-transitive action is almost simple we say $C$ is \emph{alphabet-almost-simple}. The main result in this paper states that the only alphabet-almost-simple $(X,2)$-neighbour transitive code with minimum distance $\delta\geq 3$ is the repetition code in $H(3,q)$, where $q\geq 5$.
\end{abstract}

\maketitle

\section{Introduction}

Ever since Shannon's 1948 paper \cite{shannon} there has been a great deal of interest around families of error-correcting codes with a high degree of symmetry. The rationale behind this interest is that codes with symmetry should have good error correcting properties. The first families classified were perfect (see \cite{tietavainen1973nonexistence} or \cite{Zinoviev73thenonexistence}) and  nearly-perfect (defined in \cite{GOETHALS197265} classified in \cite{LINDSTROM197740}) codes over prime power alphabets. Such codes are rare. In an effort to find further classes of efficient codes Delsarte \cite{delsarte1973algebraic} introduced \emph{completely regular} codes, a more general class of codes that posses a high degree of combinatorial symmetry. Much effort has been put into classifying particular classes of completely regular codes (see for instance \cite{borges2000nonexistence,borges2013new}), and new completely regular codes continue to be found \cite{2014arXiv1410.4785G}.  However, completely regular codes have proven to be hard to classify, and this remains an open problem. 

\emph{Completely transitive} (first defined in \cite{sole1987completely} with a generalisation studied in \cite{michealmast}) codes are a class of codes with a high degree of algebraic symmetry and are a subset of completely regular codes. As such a classification of completely transitive codes would be interesting from the point of view of classifying completely regular codes. This problem also remains open. 

Here, we relax the conditions of complete transitivity and study the family of \emph{$2$-neighbour transitive codes}, a class of codes with a moderate degree of algebraic symmetry. Note that every completely transitive code (see Section~\ref{prelim}) is $2$-neighbour transitive. By studying this class of codes we hope to find new codes and gain a better understanding of completely transitive codes. Indeed a classification of $2$-neighbour transitive codes would have as a corollary a classification of completely transitive codes. We also note that codes with $2$-transitive actions on the entries of the Hamming graph (which $2$-neighbour transitive codes indeed have), have been of interest lately, where this fact can be used to prove that certain families of codes achieve capacity on erasure channels \cite{2016arXiv160104689K}. The analysis of $2$-neighbour transitive codes is being attacked as three separate problems: \emph{entry-faithful} (see \cite{ef2nt}), \emph{alphabet-almost-simple}, and \emph{alphabet-affine}. This paper concerns the alphabet-almost-simple case. The results of this paper do not return any new examples.

However, the results here are of interest from the point of view of perfect codes over an alphabet of non-prime-power size, since in this case a code cannot be alphabet-affine (and also not entry-faithful, by \cite{ef2nt}), but may be alphabet-almost-simple. The existence of perfect codes over non-prime-power alphabets with covering radius $1$ or $2$, is still an open question (see \cite{hong1984}). By Theorem~\ref{aasmain}, if such codes exist, then they cannot be $2$-neighbour transitive (unless they are equivalent to the repetition code of length $3$). Note that in the prime power case, for each set of parameters for which a perfect code with covering radius $\rho\geq 2$ exists, a $2$-neighbour transitive code with those parameters exists. That is, the repetition and Golay codes are $2$-neighbour transitive. In fact, the repetition, Hamming and Golay codes are completely transitive, by \cite[Section~3.5]{michealmast}.

\subsection{Statement of the main results}

Let $X$ be a subgroup of the full automorphism group of the Hamming graph $\varGamma=H(m,q)$ and $C$ be a code, that is, a subset of the set of vertices $V\varGamma$. We say that $C$ is an \emph{$(X,s)$-neighbour transitive code} if $X$ is transitive on $C=C_0$, $C_1,\ldots,C_s$ (where $C_i$ are parts of the distance partition, see Section~\ref{prelim}). In joint work with Giudici and Praeger \cite{ef2nt}, the authors classified all $(X,2)$-neighbour transitive codes for which the group $X$ acts faithfully on the set of entries of the Hamming graph. In this paper, we begin the study of $(X,2)$-neighbour transitive codes such that the action of $X$ on the entries has a non-trivial kernel.

If $C$ is an $(X,2)$-neighbour transitive code with minimum distance $\delta\geq 3$, then $X_1$, the subgroup of $X$ which fixes the first entry of $H(m,q)$, has a $2$-transitive action on the alphabet in that entry (see \cite[Proposition 2.7]{ef2nt}). Any $2$-transitive action is of affine or almost-simple type \cite[Theorem~4.1B]{dixon1996permutation}. If $C$ is $(X,s)$-neighbour transitive, the action of $X_1$ on the alphabet is almost-simple and the action $X$ on the entries is transitive, we say $C$ is \emph{alphabet-almost-simple $(X,s)$-neighbour transitive}. Our main aim here is to prove the non-existence of codes which are alphabet-almost-simple $(X,2)$-neighbour transitive with minimum distance $\delta\geq 4$.

\begin{theorem}\label{aasmain}
 Let $C$ be an alphabet-almost-simple $(X,2)$-neighbour transitive code in $H(m,q)$ with minimum distance $\delta \geq 3$. Then $\delta=3$ and $C$ is equivalent to the repetition code in $H(3,q)$, where $q\geq 5$.
\end{theorem}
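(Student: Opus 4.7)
I would begin by analysing the kernel $K = \ker(X\to X^M)$ of the action on entries $M = \{1,\ldots,m\}$, which is nontrivial by hypothesis. Let $G = X_1^{Q_1}$ denote the induced almost-simple $2$-transitive group on the alphabet of entry $1$, with socle $T$. Since $K$ is normal in $X_1$, its image in $G$ is normal in $G$; were this image trivial, $K$ would act trivially on every alphabet (by transitivity of $X$ on $M$) and hence on the whole Hamming graph, contradicting $K \neq 1$. Hence the projection of $K$ to each alphabet contains a copy of the socle $T$. Combining this with $\delta \geq 3$, each vertex of $C_1$ has a unique codeword neighbour, and $(X,1)$-neighbour transitivity then upgrades to transitivity of the stabiliser $X_\alpha$ on the $m(q-1)$ neighbours of any codeword $\alpha \in C$.

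\textbf{Excluding $\delta \geq 4$.} Suppose $\delta \geq 4$. Then every vertex at distance $2$ from $\alpha$ automatically lies in $C_2$, so $X_\alpha$ is transitive on the $\binom{m}{2}(q-1)^2$ such vertices. Viewing each such vertex as an unordered pair of neighbours in distinct fibres of the partitioned neighbourhood $N(\alpha) = \bigsqcup_i N_i(\alpha)$, this yields transitivity of $X_\alpha$ on the ``cross'' $2$-subsets. The plan here is to combine this cross-pair transitivity with the almost-simple $2$-transitive action on each fibre (obtained from the previous paragraph) to force the stabiliser in $X_\alpha$ of a single neighbour to act transitively on each other fibre $N_j(\alpha)$. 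Inspecting the CFSG list of almost-simple $2$-transitive groups, this level of symmetry is incompatible with the existence of a proper entry-kernel $K$, giving a contradiction.

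\textbf{The case $\delta = 3$.} When $\delta = 3$, a distance-$2$ vertex $v$ from $\alpha$ differing at entries $\{i,j\}$ lies in $C_1$ precisely when some codeword $\beta$ at distance $3$ from $\alpha$ has difference support $\{i,j,k\}$ for some $k$ and agrees with $v$ at entries $i$ and $j$; otherwise $v \in C_2$. For $m \geq 4$, one can choose $\{i,j\}$ either inside or outside the support of such a $\beta$, producing distance-$2$ vertices in $C_2$ of qualitatively different local type, which form distinct $X_\alpha$-orbits and contradict transitivity of $X$ on $C_2$. Hence $m = 3$. With $m = \delta = 3$, any two codewords differ in all three entries, so $|C| \leq q$; transitivity of $X$ on $C$ combined with the diagonal copy of $T$ from the first paragraph forces $C$ to be a single $T$-orbit of size $q$, which up to equivalence in $\Aut(H(3,q))$ is the repetition code. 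Since the smallest degree of an almost-simple $2$-transitive group is $5$ (attained by $A_5$), we conclude $q \geq 5$.

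\textbf{Main obstacle.} The most delicate step is ruling out $\delta \geq 4$: the cross-pair transitivity of $X_\alpha$ on $N(\alpha)$ has to be leveraged against the CFSG list of almost-simple $2$-transitive groups, and a case split according to whether the induced action of $X_\alpha$ on entries is $2$-transitive (forcing a sharper alphabet constraint) or merely transitive (giving a more combinatorial contradiction) appears unavoidable.
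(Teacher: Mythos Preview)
Your approach is fundamentally different from the paper's, and its central step is a genuine gap rather than a proof. The paper never attempts a direct local analysis of $X_\alpha$ on $\Gamma_2(\alpha)$; instead it shows (Proposition~\ref{partimpdiag}, using \cite{gillespieCharNT}) that there is an $X$-invariant partition $\J$ of $M$ such that each projection $\pi_J(C)$ is \emph{diagonally} neighbour transitive with $\delta\geq 2$, upgrades this to diagonal $(\chi_J(X),2)$-neighbour transitivity or covering radius~$1$ via Proposition~\ref{partx2nt}, and then invokes the classification of diagonally $2$-neighbour transitive codes (Proposition~\ref{diag2nt}, resting on \cite{gillespiediadntc}) together with Lemma~\ref{repnopart} to force $\J$ trivial and $C=\Rep(3,q)$. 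No CFSG case analysis of $2$-transitive groups is needed beyond what is already buried in the cited papers.

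In your plan the decisive assertion---that cross-pair transitivity of $X_\alpha$ on $N(\alpha)$ is ``incompatible with the existence of a proper entry-kernel $K$'' by inspection of the CFSG list---is stated but not argued, and it is not clear why it should be true: cross-pair transitivity constrains $X_\alpha$, whereas $K$ sits inside it as a normal subgroup whose coordinate projections contain $T$, and nothing you have written links these two facts. You acknowledge this is the main obstacle, and as written it is a hope, not a proof. There are smaller issues as well: for $\delta=4$ a vertex of $\Gamma_2(\alpha)$ can also lie in $\Gamma_2(\beta)$ for another codeword, so transitivity of $X$ on $C_2$ does not immediately give transitivity of $X_\alpha$ on $\Gamma_2(\alpha)$; in the $\delta=3$ case, ``qualitatively different local type'' is not an $X$-invariant, so you have not shown the two families of distance-$2$ vertices lie in different $X$-orbits; and for $m=3$ you have only shown that each coordinate projection of $K$ contains $T$, not that $K$ contains a \emph{diagonal} copy of $T$, so the conclusion that $C$ is a diagonal $T$-orbit equivalent to $\Rep(3,q)$ is unjustified.
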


In Section~\ref{prelim} we define the notation used in the paper. In Section~\ref{structures} we give some results on the structure of alphabet-almost-simple $(X,2)$-neighbour transitive codes, as well as pose some questions about codes for which the action of $X_1$ on the alphabet in the first entry is affine. We present some examples of codes with properties of interest in relation to our results in Section~\ref{aasexamples}. Finally, in Section~\ref{aasclassification}, we give a classification of \emph{diagonally $(X,2)$-neighbour transitive} codes (see Definition~\ref{diagxsnt}) and prove Theorem~\ref{aasmain}.

\section{Preliminaries}\label{prelim}

Throughout this paper we let $M=\{1,\ldots,m\}$ and $Q=\{1,\ldots,q\}$, with $m,q\geq 2$, though if $q=2$ we will at times use $Q=\{0,1\}$. We refer to $M$ as the \emph{set of entries} and $Q$ as the \emph{alphabet}. The vertex set of the Hamming graph $\varGamma=H(m,q)$ consists of all $m$-tuples with entries labeled by the set $M$, taken from the set $Q$. An edge exists between two vertices if they differ as $m$-tuples in exactly one entry. For vertices $\alpha,\beta$ of $H(m,q)$ the \textit{Hamming distance} $d(\alpha,\beta)$ is the number of entries in which $\alpha$ and $\beta$ differ, i.e. the usual graph distance in $\varGamma$.

A code $C$ is a subset of the vertex set of the Hamming graph. The \emph{minimum distance} of $C$ is $\delta=\min\{d(\alpha,\beta)\mid \alpha,\beta\in C,\alpha\neq \beta\}$. For a vertex $\alpha\in H(m,q)$, define $$\varGamma_r(\alpha)=\{\beta\in\varGamma \mid d(\alpha,\beta)=r\},\quad \text{and}\quad d(\alpha,C)=\min\{d(\alpha,\beta) \mid \beta\in C\}.$$  We then define the \textit{covering radius} to be $$\rho =\max\{d(\alpha,C)\mid\alpha\in\varGamma\}.$$ For any $r\leq \rho$, define $C_r=\{\alpha\in\varGamma \mid d(\alpha,C)=r\}$. Note that $C_i$ is the disjoint union $\cup_{\alpha\in C}\varGamma_i(\alpha)$ for $i\leq \lfloor\frac{\delta-1}{2}\rfloor$. 

\subsection{Automorphism groups}

The automorphism group $\Aut(\varGamma)$ of the Hamming graph is the semi-direct product $B\rtimes L$, where $B\cong S_q^m$ and $L\cong S_m$ (see \cite[Theorem 9.2.1]{brouwer}). We refer to $B$ as the \emph{base group}, and $L$ as the \emph{top group}, of $\Aut(\varGamma)$. Let $g=(g_1,\dots,g_m)\in B$, $\sigma\in L$ and $\alpha$ be a vertex in $H(m,q)$. Then $g$ and $\sigma$ act on $\alpha$ as follows: 
\begin{equation*}
\alpha^g =(\alpha_1^{g_1},\ldots,\alpha_m^{g_m})\quad\text{and}\quad
\alpha^\sigma=(\alpha_{1{\sigma^{-1}}},\ldots,\alpha_{m{\sigma^{-1}}}).
\end{equation*}

We define the automorphism group of a code $C$ in $H(m,q)$ to be $\Aut(C)=\Aut(\varGamma)_C$, the setwise stabiliser of $C$ in $\Aut(\varGamma)$. For a subgroup $X\leq \Aut(\varGamma)$ we define two other important actions of $X$ which will be useful to us. First, consider the action of $X$ on the set of entries $M$, which we will write as $X^M$, defined by the following homomorphism:
\begin{center}
\begin{tabular}{cccc}
 $\mu$ :& $X$ &$\longrightarrow$ & $S_m$\\
& $(h_1,\ldots,h_m)\sigma$ &$\longmapsto$ & $\sigma$ 
\end{tabular}
\end{center}
We define $K$ to be the kernel of this map and note that $K=X\cap B$. In this paper we are concerned with $(X,2)$-neighbour transitive codes where $K\neq 1$.

We also consider the action of the stabiliser $X_i\leq X$ of the entry $i\in M$, on the alphabet $Q$. We denote this action by $X_i^Q$ and it is defined by the homomorphism:
\begin{center}
\begin{tabular}{cccc}
 $\varphi_i$ :& $X_i$ &$\longrightarrow$ & $S_q$\\
& $(h_1,\ldots,h_m)\sigma$ &$\longmapsto$ & $h_i$ 
\end{tabular}
\end{center}

Let $C$ be a code in $H(m,q)$ and let $X$ be a subgroup of $\Aut(\varGamma)$. Recall that $C$ is \emph{$(X,s)$-neighbour transitive} if each $C_i$ is an $X$-orbit for $i=0,\ldots,s$. Note that this implies $X\leq \Aut(C)$ and $C$ is also $(X,r)$-neighbour transitive, for $r<s$. If $s=1$ then $C$ is simply \emph{$X$-neighbour transitive} and if $s=\rho$, the covering radius, then $C$ is \emph{$X$-completely transitive}. Recall, if $C$ is $(X,s)$-neighbour transitive, $X^M$ is transitive on $M$ and the group $X_1^Q$ is almost-simple, then we say $C$ is \emph{alphabet-almost-simple $(X,s)$-neighbour transitive}. We may sometimes omit the group $X$ from any of these terms if the meaning is clear from the context.

%If $C$ is $X$-neighbour transitive with $\delta\geq 3$, then \cite[Proposition~2.7]{ef2nt} gives us that $X_1^Q$ is $2$-transitive. If $C$ is $(X,2)$-neighbour transitive and $\alpha\in C$ then, by \cite[Proposition~2.5]{ef2nt}, the subgroup $X_{\alpha}$ which fixes the codeword $\alpha$ acts $2$-homogeneously on $M$. 

We say that two codes, $C$ and $C'$, in $H(m,q)$, are \textit{equivalent} if there exists $x\in \Aut(\varGamma)$ such that $C^x=C'$. Since elements of $\Aut(\varGamma)$ preserve distance, equivalence preserves minimum distance. 

\subsection{Projections}

For $\alpha\in \varGamma$, we refer to the element of $Q$ appearing in the $i$-th entry of $\alpha$ as $\alpha_i$, so that $\alpha=(\alpha_1,\ldots,\alpha_m)$. For a subset $J=\{j_1,\ldots,j_k\}\subseteq M$ we define the \emph{projection of $\alpha$ with respect to $J$} as $\pi_J(\alpha)=(\alpha_{j_1},\ldots,\alpha_{j_k})$. For a code $C$ we then define the \emph{projection of $C$ with respect to $J$} as $\pi_J(C)=\{\pi_J(\alpha)\mid \alpha\in C\}$. So $\pi_J$ maps a vertex or code from $H(m,q)$ into the smaller Hamming graph $H(k,q)$.

Let $X_J$ be the setwise stabiliser of a subset $J=\{j_1,\ldots,j_k\}\subseteq M$. For $x=(h_1,\ldots,h_m)\sigma\in X_J$, we define the \emph{projection of $x$ with respect to $J$} as $\chi_J(x)$ where $$\pi_J(\alpha)^{\chi_J(x)}=\pi_J(\alpha^x).$$ To be well defined, this requires $x\in X_J$ and it follows that $\chi_J(x)=(h_{j_1},\ldots,h_{j_k})\hat\sigma\in\Aut(H(k,q))$, where $\hat\sigma$ is the element of $\Sym(J)$ induced by $\sigma$. Moreover, we define $\chi_J(X)=\{\chi_J(x)\mid x\in X_J\}$.

\section{Structural results}\label{structures}

We collect below some results from \cite{gillespieCharNT}, where alphabet-almost-simple $X$-neighbour transitive codes with $\delta\geq 3$ are characterised. This is our starting point when looking at codes $C$ which are alphabet-almost-simple $(X,2)$-neighbour transitive with $\delta\geq 3$, since we then have that $C$ is indeed $X$-neighbour transitive.

For a subgroup $T\leq S_q$ we define $\Diag_m(T)=\{(h,\ldots,h)\in B\mid h\in S_q\}$.
\begin{definition}\label{diagxsnt}
 A code $C$ in $H(m,q)$ is \emph{diagonally $(X,s)$-neighbour transitive}, if $C$ is $(X,s)$-neighbour transitive and $X\leq \Diag_m(S_q)\rtimes L$.
\end{definition}

%
%Let  $C$ be an $X$-neighbour transitive code with $\delta\geq 3$ and $\J=\{J_1,\ldots,J_\ell\}$ be an $X$-invariant partition of $M$. Then, by \cite[Proposition~3.4 and Corollary 3.7]{gillespieCharNT}, either $\pi_{J_i}(C)$ is the complete code, or $\pi_{J_i}(C)$ is $\chi_{J_i}(X)$-neighbour transitive and $\delta(\pi_{J_i}(C))\geq 2$. Moreover, by \cite[Lemma~3.6]{gillespieCharNT}, the result is the same for each $J\in \J$.

\begin{proposition}\label{partimpdiag}
  Let $C$ be an alphabet-almost-simple $X$-neighbour transitive code with $\delta\geq 3$. Then there exists an $X$-invariant partition $\J=\{J_1,\ldots,J_\ell\}$ of $M$ such that $\pi_{J_i}(C)$ is diagonally $\chi_{J_i}(X)$-neighbour transitive and $\delta(\pi_{J_i}(C))\geq 2$.
\end{proposition}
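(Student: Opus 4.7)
The strategy is to exploit the structure of the kernel $K = X \cap B$ as a subdirect product of copies of the simple socle of the alphabet action, extract a natural partition $\mathcal{J}$ of $M$ from this structure, and verify the required properties of the resulting projections.

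First, let $T = \soc(X_1^Q)$, a non-abelian simple group acting $2$-transitively on $Q$. Since $K \trianglelefteq X$, its image $\varphi_i(K)$ is normal in the almost-simple group $X_i^Q$, and so is either trivial or contains $T$. The structural results cited from \cite{gillespieCharNT} rule out the trivial case in the alphabet-almost-simple setting; combined with transitivity of $X^M$ on $M$, this exhibits $K$ as a subdirect subgroup of $T^m$.

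Second, define $i \sim j$ on $M$ by $\ker(\varphi_i|_K) = \ker(\varphi_j|_K)$ and let $\mathcal{J} = \{J_1,\ldots,J_\ell\}$ be the equivalence classes. A Goursat-style argument for subdirect products of copies of a non-abelian simple group then yields an internal direct-product decomposition $K = D_1 \times \cdots \times D_\ell$ with each $D_k \leq T^{J_k}$ a full diagonal copy of $T$. Because $X$ normalises $K$ and permutes the kernels $\ker(\varphi_i|_K)$ compatibly with its action on $M$, the partition $\mathcal{J}$ is $X$-invariant. Relabelling the alphabets on the entries of $J_k$ (via conjugation by a base-group element, which preserves the code up to equivalence) turns $D_k$ into $\Diag_{|J_k|}(T)$; its normaliser in $S_q \wr \Sym(J_k)$ equals $\Diag_{|J_k|}(S_q) \rtimes \Sym(J_k)$, because $T$ is non-abelian simple and $2$-transitive on $Q$ and hence has trivial centraliser in $S_q$. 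Consequently $\chi_{J_k}(X) \leq \Diag_{|J_k|}(S_q) \rtimes \Sym(J_k)$, establishing the diagonal condition.

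Third, transitivity of $\chi_{J_k}(X)$ on $\pi_{J_k}(C)$ is inherited from transitivity of $X$ on $C$, and transitivity on the distance-$1$ neighbours of $\pi_{J_k}(C)$ follows by combining this with the $2$-transitive action of $\Diag_{|J_k|}(T) \leq \chi_{J_k}(X)$ within each entry. For the minimum distance bound $\delta(\pi_{J_k}(C)) \geq 2$, suppose instead that there exist $\alpha, \beta \in C$ whose $\pi_{J_k}$-projections differ in exactly one entry $j \in J_k$. Using the direct-factor $D_k \leq K$, one picks $\kappa \in D_k$ acting as $(h,\ldots,h)$ on $J_k$ and trivially off $J_k$, with $h \in T$ chosen so that $\alpha^\kappa$ coincides with $\beta$ on $J_k$. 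Then $d(\alpha^\kappa,\beta) = d(\alpha,\beta) - 1$ lies in $[1,\delta-1]$, contradicting $\delta \geq 3$.

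The main obstacle in the argument above is producing the diagonal element $h$: it must simultaneously fix each entry of $\alpha|_{J_k}$ at which $\alpha$ and $\beta$ agree while sending $\alpha_j$ to $\beta_j$. Arranging this in general requires a careful application of the $2$-transitivity of $T$ on $Q$, possibly combined with finer orbit information on $\pi_{J_k}(C)$ drawn from \cite{gillespieCharNT}; this is where I expect the principal technical work to lie.
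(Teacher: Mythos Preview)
Your overall architecture matches the paper's: extract an $X$-invariant partition from the diagonal socle structure of $K = X \cap B$, push $\chi_{J_i}(X)$ into $\Diag_k(S_q) \rtimes S_k$, and then verify neighbour-transitivity and the minimum-distance bound for the projections. The paper delegates essentially all of this to \cite{gillespieCharNT}; you attempt a self-contained treatment, which is reasonable, but the minimum-distance step contains a genuine gap.

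The difficulty is not only the construction of $h$ (which you flag) but the logic that follows it. You claim $d(\alpha^\kappa,\beta) = d(\alpha,\beta) - 1$ lies in $[1,\delta-1]$, yet there is no reason for $d(\alpha,\beta) \leq \delta$: the codewords $\alpha,\beta$ were chosen only so that their $J_k$-projections differ in a single entry, and they may disagree in arbitrarily many entries outside $J_k$. So even granting the existence of $\kappa$, you obtain two distinct codewords at distance $d(\alpha,\beta)-1$, which need not be below $\delta$ and hence yields no contradiction. The paper's route is different: \cite[Prop.~3.4 and Cor.~3.7]{gillespieCharNT} establish that each $\pi_{J_i}(C)$ is either the complete code or is $\chi_{J_i}(X)$-neighbour transitive with $\delta(\pi_{J_i}(C)) \geq 2$, and the complete-code alternative is then excluded because no subgroup of $\Diag_k(S_q)\rtimes S_k$ can act transitively on all of $H(k,q)$. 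Your direct attack on $\delta(\pi_{J_i}(C)) \geq 2$ does not conclude as written, and you also do not address the complete-code possibility at all.

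A smaller structural point: you assert that $K$ is subdirect in $T^m$, but $\varphi_i(K)$ is only known to \emph{contain} $T$, not to equal it. The paper accordingly works with the \emph{socle} of $X\cap B$ rather than $X\cap B$ itself, and this is also what forces the bifurcation in \cite[Section~5]{gillespieCharNT} into two cases (diagonal containment directly, or only after passing to a refined partition $\hat{\mathcal J}$), a subtlety your normaliser computation elides.
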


\begin{proof}
 Let $T$ be the non-abelian simple socle of the almost-simple $2$-transitive group $X_1^Q$. By the discussion following \cite[Proposition~5.2]{gillespieCharNT}, there exists a partition $\J=\{J_1,\ldots,J_\ell\}$ of $M$, with $|J_i|=k$, such that the socle of $X\cap B$ is equal to $D_1\times \cdots \times D_\ell$, where each $D_i$ is a full diagonal subgroup of $T^k$ acting on $\pi_{J_i}(\varGamma)$. Moreover, by \cite[Remark~5.4]{gillespieCharNT}, $\J$ is $X$-invariant. By examining this socle, it can be shown \cite[Section~5]{gillespieCharNT} that, up to equivalence, two possibilities occur. Either $\chi_{J_i}(X)\leq \Diag_k(S_q)\rtimes S_k$ for all $i$, or there exists a more refined $X$-invariant partition $\hat{\mathcal{J}}$ of $M$ such that $\chi_J(X)\leq\Diag_{\hat{k}}(S_q)\rtimes S_{\hat{k}}$ for all $J\in\hat{\mathcal{J}}$.
 
 %There are two possible forms for each $D_i$, Form 1 and Form 2 given in \cite[Definition~5.6]{gillespieCharNT}, and, by \cite[Lemma~5.7]{gillespieCharNT}, each $D_i$ has the same form. In the case each $D_i$ is Form 1, the proof of \cite[Proposition~6.1]{gillespieCharNT}, in particular \cite[(6.1)]{gillespieCharNT}, shows that $\chi_{J_i}(X)\leq \Diag_k(S_q)\rtimes S_k$. If each $D_i$ has Form 2, then by \cite[Remark~5.10]{gillespieCharNT} we can replace $\J$ by a more refined $X$-invariant partition. Again, the proof of \cite[Proposition~6.1]{gillespieCharNT} shows that, for our new partition $\J$, $\chi_{J_i}(X)\leq \Diag_k(S_q)\rtimes S_k$. 
 
 In either case, it follows from \cite[Prop.~3.4 and Cor.~3.7]{gillespieCharNT} that $\chi_{J_i}(X)$ acts transitively on $\pi_{J_i}(C)$ and either $\pi_{J_i}(C)$ is the complete code or it is $\chi_{J_i}(X)$-neighbour transitive with minimum distance at least $2$. Since $\chi_{J_i}(X)$ is a diagonal subgroup, we deduce that $\pi_{J_i}(C)$ is $\chi_{J_i}(X)$-neighbour transitive as no diagonal subgroup acts transitively on the complete code.
\end{proof}

\begin{proposition}\label{partx2nt}
 Let $C$ be an $(X,2)$-neighbour transitive code with $\delta\geq 3$ in $H(m,q)$, and suppose $\J=\{J_1,\ldots,J_l\}$ is an $X$-invariant partition of $M$. Then for all $i\in\{1,\ldots,l\}$, either;
 \begin{enumerate}
  \item $\pi_{J_i}(C)$ is the complete code, $\delta(\pi_{J_i}(C))=1$, and $\chi_{J_i}(X)$ is transitive on $\pi_{J_i}(C)$;\label{partx2nt1}
  \item $\pi_{J_i}(C)$ has covering radius $1$, $\delta(\pi_{J_i}(C))= 2$ or $3$, and is $(\chi_{J_i}(X),1)$-neighbour transitive; or\label{partx2nt2}
  \item $\pi_{J_i}(C)$ is $(\chi_{J_i}(X),2)$-neighbour transitive.\label{partx2nt3}
 \end{enumerate}
\end{proposition}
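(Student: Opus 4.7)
My plan is to fix a block $J=J_i\in\J$, set $k=|J|$, and study the projected code $C'=\pi_J(C)$ in $H(k,q)$ under the induced group $X'=\chi_J(X)$, splitting into cases by the covering radius $\rho'$ of $C'$.

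First I would establish that $X'$ acts transitively on $C'$. This is not automatic from $X$-transitivity on $C$, since $X'=\chi_J(X_J)$ comes only from the setwise stabiliser $X_J$, which may have several orbits on $C$; elements of $X\setminus X_J$ relate $\pi_J(\alpha)$ to projections onto other blocks of $\J$. One uses the $X$-invariance of $\J$ together with the associated block action to bridge these orbits, following the pattern of the proof of Proposition~\ref{partimpdiag} and invoking \cite[Prop.~3.4 and Cor.~3.7]{gillespieCharNT}.

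Next I would split on $\rho'$. If $\rho'=0$ then $C'$ is the full vertex set of $H(k,q)$, so $\delta(C')=1$ and case~(1) holds. If $\rho'\geq 1$, I would promote $X$-transitivity on $C_1$ to $X'$-transitivity on $C'_1$ via a lifting argument: given $v'\in C'_1$ adjacent to some $u'=\pi_J(\alpha)\in C'$ with $\alpha\in C$, define $\tilde\beta$ by $\pi_J(\tilde\beta)=v'$ and $\pi_{M\setminus J}(\tilde\beta)=\pi_{M\setminus J}(\alpha)$, so that $d(\tilde\beta,\alpha)=1$; since $v'\notin C'$, the vertex $\tilde\beta$ must lie in $C_1$. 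Any two such lifts are related by $X$-transitivity on $C_1$, and the same block-stabiliser bridging argument produces an element of $X_J$ whose $\chi_J$-image realises the required action on $C'_1$. If $\rho'\geq 2$, applying the identical argument to $C_2$ yields $(X',2)$-neighbour transitivity, which is case~(3). In the remaining sub-case $\rho'=1$ we are in case~(2); the upper bound $\delta(C')\leq 3$ is the sphere-packing estimate $\delta\leq 2\rho'+1$, and $\delta(C')\geq 2$ follows from a further lifting argument exploiting the constraint $\delta(C)\geq 3$ together with the $(X,2)$-neighbour transitive structure.

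The main obstacle I foresee is the transitivity transfer: a witness $x\in X$ for transitivity on $C_r$ need not stabilise $J$, so realising the corresponding action on $C'_r$ through $\chi_J$ requires the block-structure bridging argument from the first step. Once this is in hand, checking that each lift $\tilde\beta$ lands in the desired distance class $C_r$ and carrying out the sphere-packing estimates for case~(2) are routine.
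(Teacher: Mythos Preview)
Your overall scaffolding matches the paper's: case split on the covering radius $\rho'$ of $\pi_J(C)$, invoke \cite[Prop.~3.4 and Cor.~3.7]{gillespieCharNT} for transitivity on $\pi_J(C)$ and $\pi_J(C)_1$ and for $\delta(\pi_J(C))\geq 2$, and for $\rho'\geq 2$ lift a pair $\mu,\nu\in\pi_J(C)_2$ to $\hat\mu,\hat\nu\in C_2$ by padding with codeword entries outside $J$. The paper does exactly this.

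The gap is at the point you yourself flag as the main obstacle. You say a witness $x\in X$ sending $\hat\nu$ to $\hat\mu$ ``need not stabilise $J$'' and then appeal to an unspecified ``block-stabiliser bridging argument'' borrowed from the $C'$ and $C'_1$ cases. But those cases are handled by the cited results, which do not extend to $C'_2$; there is no bridging pattern there to reuse. The paper's actual argument is that $x$ \emph{must} lie in $X_J$, directly, because of the way the lift is built: on every block $J_j\neq J$ the vertex $\hat\nu$ coincides with the codeword $\alpha$, so if $x$ carried some $J_j$ to $J$ then $\pi_J(\hat\nu^{x})=\pi_J(\alpha^{x})\in\pi_J(C)$, contradicting $\pi_J(\hat\mu)=\mu\in\pi_J(C)_2$. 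This one-line observation is the whole proof of part~(3); without it your proposal does not close.

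Everything else you list (verifying $\hat\nu\in C_2$, the bound $\delta(C')\leq 2\rho'+1$ for case~(2)) is indeed routine and is treated the same way in the paper.
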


\begin{proof}
 Let ${\bar C} =\pi_{J_i}(C)$. The fact that $\chi_{J_i}(X)$ is transitive on ${\bar C}$ and ${\bar C}_1$, if $C_1$ is non-empty, follows from \cite[Proposition~3.4]{gillespieCharNT}. From this we deduce both parts \ref{partx2nt1}~and \ref{partx2nt2}~hold. Now, \cite[Corollary~3.7]{gillespieCharNT} gives us that $\delta(\pi_{J_i}(C))\geq 2$, and $\delta(\pi_{J_i}(C))$ is at most $3$ in part \ref{partx2nt2}. Moreover, to prove part \ref{partx2nt3}, we need only show that if ${\bar C}_2$ is non-empty, then $\chi_{J_i}(X)$ is transitive on ${\bar C}_2$. 
 
 Suppose $\bar C$ has covering radius at least $2$. Let $\mu,\nu\in {\bar C}_2$. Then there exists $\alpha,\beta\in C$ such that $d(\mu,\pi_{J_i}(\alpha))=d(\nu,\pi_{J_i}(\beta))=2$. Let $\hat\nu\in H(m,q)$ with $\hat\nu_u=\nu_u$ for $u$ in $J_i$ and $\hat\nu_v=\alpha_v$ otherwise. Similarly, let $\hat\mu\in H(m,q)$ with $\hat\mu_u=\mu_u$ for $u$ in $J_i$ and $\hat\mu_v=\beta_v$ otherwise. We claim that $\hat\nu, \hat\mu \in C_2$. We show this for $\hat\nu$ and note that an identical agrument holds for $\hat\mu$. First, note that $d(\alpha,\hat\nu)=2$ and $\delta\geq 3$, so $\hat\nu\notin C$. Suppose $\hat\nu\in C_1$. Then there exists $\alpha'\in C$ such that $d(\hat\nu,\alpha')=1$. We then have $d(\nu,\pi_{J_i}(\alpha'))\leq 1$. However, this contradicts $\nu\in {\bar C}_2$. Hence $\hat\mu,\hat\nu \in C_2$. 
 
 As $C$ is $(X,2)$-neighbour transitive, there exists an $x=h\sigma\in X$ mapping $\hat\nu$ to $\hat\mu$. We claim $x\in X_{J_i}$. Suppose $x\notin X_{J_i}$. Then, since $\J$ is a system of imprimitivity for the action of $X$ on $M$, there exists $j\in\{1,\ldots,l\}$ such that $j\neq i$ and $J_j^\sigma=J_i$. Since $\pi_{J_j}(\hat\nu)=\pi_{J_j}(\alpha)$, this implies that $\pi_{J_i}(\hat\nu^x)=\pi_{J_i}(\alpha^x)\in {\bar C}$ and hence $\pi_{J_i}(\hat\nu^x)\neq \mu$, which contradicts the fact that $\hat\nu^x=\hat\mu$. Thus $x\in X_{J_i}$ and 
 $$\nu^{\chi_{J_i}(x)} = \pi_{J_i}(\hat\nu)^{\chi_{J_i}(x)} = \pi_{J_i}(\hat\nu^x)= \pi_{J_i}(\hat\mu)= \mu.$$
\end{proof}

\begin{proposition}
 Let $C$ be an $(X,2)$-neighbour transitive code in $H(m,q)$ with $\delta\geq 3$, and $\J$ be an $X$-invariant partition of $M$. Then, for all $J\in\J$,
 \begin{enumerate}
  \item $\chi_J(X)_1^Q$ is $2$-transitive on $Q$; and,
  \item for $\alpha\in C$, $\chi_J(X)_{\pi_J(\alpha)}$ is transitive on $J$.
 \end{enumerate}
\end{proposition}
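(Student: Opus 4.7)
The plan is to push both statements through the projection homomorphism $\chi_J$, reducing them to known properties of the original group $X$. The two background facts I shall use are: (a) by \cite[Proposition~2.7]{ef2nt}, since $C$ is $(X,2)$-neighbour transitive with $\delta\geq 3$, the action $X_j^Q$ is $2$-transitive on $Q$ for every $j\in M$; and (b) for any $\alpha\in C$, the stabiliser $X_\alpha$ acts transitively on $M$. Fact~(b) follows from $X$-neighbour transitivity together with $\delta\geq 3$: given $i,j\in M$, form $1$-neighbours $\beta,\beta'\in C_1$ of $\alpha$ by altering entries $i$ and $j$ respectively, use transitivity of $X$ on $C_1$ to pick $x\in X$ with $\beta^x=\beta'$, and note that $\delta\geq 3$ makes $\alpha$ the unique codeword at distance~$1$ from $\beta'$, forcing $\alpha^x=\alpha$ and $i^{\mu(x)}=j$.

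For part~(1), fix $j\in J$ corresponding to the first entry of the smaller Hamming graph $H(|J|,q)$. Because $\J$ is $X$-invariant and $J$ is the unique part of $\J$ containing $j$, any element of $X_j$ must fix $J$ setwise, so $X_j\leq X_J$ and $\chi_J(X_j)$ is well defined. By the definition of $\chi_J$, the image $\chi_J(X_j)$ lies inside $\chi_J(X)_1$ and induces on $Q$ at the first entry of $H(|J|,q)$ exactly the permutations of $X_j^Q$, which is $2$-transitive by fact~(a). Hence $\chi_J(X)_1^Q$ contains a $2$-transitive subgroup and is itself $2$-transitive.

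For part~(2), fact~(b) gives that $X_\alpha$ is transitive on $M$, and since $\J$ is a block system for $X$ on $M$ it is in particular a block system for the transitive subgroup $X_\alpha$; hence $X_\alpha$ is transitive on $\J$, and the standard block-stabiliser property yields that $X_\alpha\cap X_J$ acts transitively on $J$. Applying $\chi_J$ preserves this action on $J$, and $\chi_J(X_\alpha\cap X_J)$ clearly fixes $\pi_J(\alpha)$, so $\chi_J(X)_{\pi_J(\alpha)}$ contains a subgroup transitive on $J$, as required.

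I expect the main obstacle to be purely bookkeeping rather than conceptual: one must verify carefully the containments $X_j\leq X_J$ and $X_\alpha\cap X_J\leq X_J$, and check that $\chi_J$ commutes appropriately with taking stabilisers of entries and of projected codewords. No new structural input or appeal to the classification of $2$-transitive groups appears to be needed.
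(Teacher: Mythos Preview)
Your proposal is correct and follows essentially the same route as the paper's proof: both parts are pushed through $\chi_J$ by observing $X_j\leq X_J$ (so $\chi_J(X_j)\leq\chi_J(X)_1$ with the same action on $Q$) and that $(X_\alpha)_J$ is transitive on $J$ by the block-stabiliser property. Your write-up is in fact more explicit, since you supply the argument for fact~(b) (transitivity of $X_\alpha$ on $M$), which the paper simply asserts.
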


\begin{proof}
 As C is $X$-neighbour transitive with $\delta\geq 3$, we have that $X_1^Q$ is $2$-transitive and $X^M$ is transitive. One then deduces that $X_i^Q$ is 2-transitive for all $i$. Now, because $\mathcal{J}$ is an $X$-invariant partition, it follows that $X_i=(X_J)_i$ for all $i\in J$. This in turn implies that $\chi_J(X)_i=\chi_J(X_i)$. It is now straight forward to show that $\chi_J(X_i)^Q=X_i^Q$.

 Now, since $X_\alpha$ is transitive on $M$ and $\mathcal{J}$ is an $X$-invariant partition of $M$, it follows that $(X_\alpha)_J$ is transitive on $J$. Thus $\chi_J(X_\alpha)\leq\chi_J(X)_{\pi(\alpha)}$ is transitive on $J$.
\end{proof}

%The paper \cite{ef2nt} classifies all codes $C$ which are $(X,2)$-neighbour transitive where $X$ acts faithfully on $M$ and $\delta\geq 5$. In this paper, we classify all $X$-alphabet-almost simple and $(X,2)$-neighbour transitive codes with non-trivial kernel for the action of $X$ on $M$ and $\delta\geq 3$. Other work being undertaken as part of the second authors PhD project, is studying $(X,2)$-neighbour transitive codes where $X_1^Q$ is affine, $X^M$ has non-trivial kernel and $\delta\geq 5$.  

The previous two propositions suggest studying $(X,2)$-neighbour transitive codes where $X$ acts primitively on $M$ with $\delta\geq 2$. An answer to the following questions would provide us with the building blocks for $(X,2)$-neighbour transitive codes with $\delta\geq 3$.

\begin{question}\label{ntrdeltag2}
 Can we classify all $(X,2)$-neighbour transitive codes with $\delta\geq 2$ such that $X^M$ is primitive and $X_1^Q$ is $2$-transitive?
\end{question}

\begin{question}\label{perfectcon}
 Can we classify all $(X,1)$-neighbour transitive codes with $\delta=2$ or $3$ and $\rho=1$ such that $X^M$ is primitive and $X_1^Q$ is $2$-transitive?
\end{question}

If $C$ is $(X,s)$-neighbour transitive and $X$ acts faithfully on $M$ we say $C$ is \emph{entry-faithful $(X,s)$-neighbour transitive}. If $C$ is $(X,s)$-neighbour transitive, $X\leq \Aut(C)$, $X^M$ is transitive, and $X_1^Q$ is affine we say $C$ is \emph{alphabet-affine $(X,s)$-neighbour transitive}. Questions~\ref{ntrdeltag2} and \ref{perfectcon} can be further broken down into entry-faithful and non-trivial kernel cases, that is, alphabet-affine and alphabet-almost-simple. By the main result of this paper, the outstanding cases of Question~\ref{ntrdeltag2} are alphabet-almost-simple $(X,2)$-neighbour transitive with $\delta=2$, and alphabet-affine $(X,2)$-neighbour transitive, where $X^M$ is primitive and $X_1^Q$ is $2$-transitive. 

Given Proposition~\ref{partimpdiag}, a third question is the following.

\begin{question}
 Can we contrsuct $(X,2)$-neighbour transitive codes with $\delta\geq 3$ by taking copies of $(X,1)$-neighbour transitive codes with $\delta=2$ or $3$ and $\rho=1$. 
\end{question}

\section{Examples}\label{aasexamples}

We begin this section by considering some examples of codes which have properties relating to the results of the previous section. We first introduce the operators $\Prod$ and $\Rep$ which allow the construction of new codes from old ones. For an arbitrary code $C$ in $H(m,q)$ we define $\Prod(C,\ell)$ and $\Rep_\ell(C)$ in $H(m\ell,q)$ as $$\Prod(C,\ell)=\{(\boldsymbol\alpha_1,\ldots,\boldsymbol\alpha_\ell)\mid \boldsymbol\alpha_i\in C\},$$ and $$\Rep_\ell(C)=\{(\boldsymbol\alpha,\ldots,\boldsymbol\alpha)\mid \boldsymbol\alpha\in C\}.$$ The \emph{repetition code} $\Rep(m,q)$ in $H(m,q)$ is the set of all vertices $(a,\ldots,a)$ consisting of a single element $a\in Q$ repeated $m$ times.

The next two examples are codes which are alphabet-almost-simple $X$-completely transitive, though the second has $\delta=2$.

\begin{example}\label{k3}
 Let $C=\Rep(3,q)$, where $q\geq 5$, and $X=\Diag_3(S_q)\rtimes S_3$. By \cite[Example~3.1]{Giudici1999647} $C$ is $X$-completely transitive with covering radius $\rho=2$, and hence $(X,2)$-neighbour transitive. (Note that the proof of this fact is stated for $q\geq 7$, but works for $q\geq 5$.) Now $X_1^Q\cong S_q$, with $q\geq 5$, is almost-simple and $X^M\cong S_3$ is transitive on $M$. Hence $C$ is alphabet-almost-simple $X$-completely transitive.
\end{example}

\begin{example}\label{k2}
 Let $q\geq 5$, $\ell\geq 2$, $C=\Prod(\Rep(2,q),\ell)$ and $X=(\Diag_2(S_q))^\ell\rtimes U$, where $\Diag_2(S_q)$ is a subgroup of the base group of $\Aut(H(2,q))$ and $U=S_2\wr S_\ell=S_2^\ell\rtimes S_\ell$ is a subgroup of the top group of $\Aut(H(2\ell,q))$. Let $\J=\{J_1,\ldots,J_\ell\}$, with $J_i=\{2i-1,2i\}$, be the partition of $M$ preserved by $U$. Note that $\delta=2$. Let $R\subseteq\{1,\ldots,\ell\}$ of size $s$, and $\nu\in H(m,q)$ be such that $\pi_{J_i}(\nu)=(a,b)$, where $a\neq b$ for all $i\in R$, and $a=b$ for all $i\notin R$. Any codeword $\beta$ is at least distance $s$ from $\nu$, since $d(\pi_{J_i}(\nu),\pi_{J_i}(\beta))\geq 1$ for each $i\in R$. Also, there exists some codeword $\alpha$ with $\pi_{J_i}(\alpha)=(a,a)$ whenever $\pi_{J_i}(\nu)=(a,b)$ for $i\in\{1,\ldots,\ell\}$, and hence $d(\alpha,\nu)=s$. So $\nu\in C_s$. Any vertex $\nu$ of $H(2\ell,q)$ can be expressed in this way, for some $R$, since $\pi_{J_i}(\nu)=(a,b)$ has either $a=b$ or $a\neq b$. Thus, for each $s$, $C_s$ consists of all such vertices $\nu$ where $|R|=s$. It also follows from this that $\rho=\ell$. 
 
 Let $\nu\in C_s$, with $R$ as above. Let $x=(h_{J_1},\ldots,h_{J_\ell})\sigma\in X$ where $h_{J_i}\in\Diag_2(S_q)$ such that $\pi_{J_i}(\nu)^{h_{J_i}}=(1,2)$, for $i\in R$, and $\pi_{J_i}(\nu)^{h_{J_i}}=(1,1)$, for all $i\notin R$. Moreover, since $S_\ell$ is $\ell$-transitive, there exists a $\sigma\in S_\ell\leq S_2\wr S_\ell$ mapping $\{J_{i_1},\ldots,J_{i_s}\}$ to $\{J_1,\ldots,J_s\}$ (where $R=\{i_1,\ldots,i_s\}$), whilst preserving order within each $J_i$. Then $\nu^x=\gamma\in C_s$, where $\pi_{J_i}(\gamma)=(1,2)$ for all $i\in \{1,\ldots,s\}$ and $\pi_{J_i}(\gamma)=(1,1)$ for all $i\notin \{s+1,\ldots,\ell\}$. Since we can map any such $\nu$ to $\gamma$, $X$ is transitive on $C_s$ for each $s\in\{1,\ldots,\ell\}$. Hence $C$ is $X$-completely transitive, and in particular $(X,2)$-neighbour transitive for $\ell\geq 2$. Since $X_1^Q\cong S_q$ and $X^M\cong S_2\wr S_\ell$ is transitive on $M$, $C$ is alphabet-almost-simple $X$-completely transitive.
\end{example}

\begin{lemma}\label{repnopart}
 Suppose $C$ is an $(X,2)$-neighbour transitive code in $H(m,q)$, with $q\geq 3$, and $\J$ is an $X$-invariant partition of $M$, such that $\pi_J(C)=\Rep(k,q)$, for all $J\in \J$ where $k=|J|$. Then either $\delta=k=2$, or $\J$ is a trivial partition.
\end{lemma}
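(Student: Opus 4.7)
The plan is to extract an $X$-invariant from the projection hypothesis and to construct, assuming $\J$ is non-trivial, two witnesses in $C_2$ which violate this invariant unless $k=\delta=2$.

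Write $\J=\{J_1,\ldots,J_\ell\}$ with $|J_i|=k$, and fix $\alpha\in C$ with $\pi_{J_i}(\alpha)=(a_i,\ldots,a_i)$. Every codeword is constant on each block, so any two codewords that differ on some block $J$ differ in all $k$ positions of $J$; in particular $\delta$ is a positive multiple of $k$. The key technical step is to show that for any $x=h\sigma\in X$ with $h=(h_1,\ldots,h_m)$, the components of $h$ are constant on each block of $\J$. Indeed, writing $J=\{j_1,\ldots,j_k\}$, the projection $\pi_J(\alpha^x)=(a^{h_{j_1}},\ldots,a^{h_{j_k}})$ must lie in $\pi_J(C)=\Rep(k,q)$, where $a$ is the value of $\alpha$ on the block $J^{\sigma^{-1}}$; as $\alpha$ varies, $a$ ranges over all of $Q$ (since $\pi_{J^{\sigma^{-1}}}(C)=\Rep(k,q)$), so the permutations $h_{j_1},\ldots,h_{j_k}$ must agree pointwise on $Q$ and therefore coincide. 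Consequently, the quantity $N(\nu)=|\{J\in\J:\pi_J(\nu)\text{ is non-constant}\}|$ is preserved by $X$.

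Now suppose that $\J$ is non-trivial and $(k,\delta)\neq(2,2)$; since $\delta\geq k$ and $k\mid\delta$, this is equivalent to $k\geq3$ or ($k=2$ and $\delta\geq4$). Using $q\geq 3$, choose $b,c\in Q\setminus\{a_1\}$ with $b\neq c$ and $b'\in Q\setminus\{a_2\}$. Let $\nu$ be obtained from $\alpha$ by changing two coordinates in $J_1$ to $b$ and $c$, and let $\nu'$ be obtained from $\alpha$ by changing one coordinate in $J_1$ to $b$ and one coordinate in $J_2$ to $b'$. A short block-by-block computation of $d(\nu,\beta)$ and $d(\nu',\beta)$ for $\beta\in C$ shows that both $\nu$ and $\nu'$ lie in $C_2$, the minimum distance $2$ being attained in each case by $\beta=\alpha$. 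By construction $N(\nu)=1$ and $N(\nu')=2$.

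The $(X,2)$-neighbour transitivity of $C$ supplies some $x\in X$ with $\nu^x=\nu'$, contradicting the $X$-invariance of $N$. Hence $\J$ must be trivial, or $k=\delta=2$. The main subtlety is verifying that $\nu$ lies in $C_2$ (rather than $C_1$) when $k=2$: here the hypothesis $\delta\geq 4$ is exactly what rules out the existence of a codeword $\beta$ differing from $\alpha$ only on the block $J_1$, which would otherwise satisfy $d(\nu,\beta)=1$.
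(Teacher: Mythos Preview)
Your proof is correct and follows essentially the same strategy as the paper's: establish that the base-group part of any $x\in X$ is diagonal on each block, then exhibit two vertices in $C_2$ --- one with the two changed entries inside a single block, one with them spread across two blocks --- which cannot lie in the same $X$-orbit. Your invariant $N(\nu)$ (the number of non-constant blocks) packages the contradiction more uniformly than the paper's case split on $k\geq 3$ versus $k=2$, but the underlying constructions and the verification that both witnesses lie in $C_2$ are the same.
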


\begin{proof}
 Let $x=(h_1,\ldots,h_m)\sigma\in X$ and $J\in \J$. By the hypothesis it follows that for all $a\in Q$, there exists $\alpha\in C$ such that $\pi_J(\alpha)=(a,\ldots,a)$. Suppose $J^\sigma=J'\in\J$. Then $\pi_{J'}(\alpha^x)=(a^{h_{i_1}},\ldots,a^{h_{i_k}})\sigma=(b,\ldots,b)$ for some $b\in Q$, that is, $a^{h_{i_s}}=a^{h_{i_t}}$ for all $i_s,i_t\in J$. In particular $\chi_J(x\sigma^{-1})=(h,\ldots,h)$ for some $h\in S_q$, and $X\leq \Diag_k(S_q)\wr U$, where $U$ is the stabiliser of $\J$ in the top group.
 
 Now suppose that $\J$ is a non-trivial partition, so $k,\ell\geq 2$. Since $C\subseteq \Prod(\Rep(k,q),\ell)$, which has minimum distance $k$, it follows that $\delta\geq k\geq 2$.
 
 Suppose $\delta\geq 3$. As $C$ is a subset of $\Prod(\Rep(k,q),\ell)$ we can replace $C$ by an equivalent code contained in $\Prod(\Rep(k,q),\ell)$ containing $\alpha=(1,\ldots,1)$ and such that $$\J=\left\{\{1,\ldots,k\},\{k+1,\ldots,2k\},\cdots,\{m-k+1,\ldots,m\}\right\}.$$ Consider,
 \begin{align*}
  \mu & = (2,3,1,1,\ldots,1,1,1,1,\ldots,1,\cdots, 1,\ldots,1)\quad \text{and} \\
  \nu & = (\underbrace{2,1,1,1,\ldots,1}_{k\text{ entries}},\underbrace{2,1,1,\ldots,1}_{k\text{ entries}},\cdots,\underbrace{1,\ldots,1}_{k\text{ entries}}).
 \end{align*}
 If $k=2$, then we claim $\mu\in C_2$. Any vertex $\beta\in\Prod(\Rep(2,q),\ell)\supseteq C$ with $d(\mu,\beta)=1$ is of the form $\gamma=(a,a,1,\ldots,1)$, where $a=2$ or $3$. However, no such $\gamma$ is an element of $C$, since each is distance $2$ from $\alpha$. If $k\geq 3$ then $\mu\in C_2$ since $d(\alpha,\mu)=2$ and there is no closer codeword as $\pi_{J_1}(\mu)\in \pi_{J_1}(C)_2$. In both cases $\nu\in C_2$ since $d(\alpha,\nu)=2$ and no codeword is closer, as $\pi_{J_i}(\nu)\in \pi_{J_i}(C)_1$ for $i=1,2$. Let $x=(h_1,\ldots,h_m)\sigma\in X$ such that $\mu^x=\nu$. We reach a contradiction here, since $h_1=h_2=\cdots=h_k=h$ cannot, assuming $k\geq 3$, map the set $\{1,2,3\}$ to either of the sets $\{1,2\}$ or $\{1\}$. In the case $k=2$, in at least one block we must map the set $\{1\}$ to $\{1,2\}$, which is not possible. Hence $2\geq\delta\geq k\geq 2$.
\end{proof}

The next example shows that it is possible to have a neighbour transitive code where $\delta\geq 3$ and the projection code for some system of imprimitivity on $M$ is the complete code. Note that this does not contradict the results from \cite{gillespiediadntc} as there is more than one system of imprimitivity present.

\begin{example}
 Let ${\bar C}=\Prod(C,\ell)$ be a code in $\varGamma=H(m,q)$, where $m=k\ell$ and $C$ is an $X$-neighbour transitive code in $H(k,q)$ where $X\cap B$ is transitive on $C$ and $\delta\geq 3$. Let $\bar X=\langle (X\cap B)^\ell,\Diag_\ell(X),S_\ell \rangle$ preserve the partition $$\J=\{\{1,\ldots,k\},\ldots,\{m-k+1,\ldots,m\}\}=\{J_1,\ldots,J_\ell\},$$ of $M$, where $\chi_J((X\cap B)^\ell)=X\cap B$ and $\chi_J(\Diag_\ell(X))=X$ for all $J\in \J$, and $S_\ell$ acts as pure permutations by the permuting blocks of $\J$ whilst preserving the order of entries within a given block. It follows that we preserve two $\bar X$-invariant partitions, $\J$ and one attained from taking the corresponding entries, by order, from each copy of $C$ as a block: $$\J'=\{\{1,k+1,\ldots,m-k+1\},\ldots,\{\ell,\ell+k,\ldots,m\}\}.$$ Given any $\alpha=(\boldsymbol\alpha_1,\ldots,\boldsymbol\alpha_\ell)\in {\bar C}$, $\boldsymbol\alpha_i\in C$, and $\beta=(\boldsymbol\beta_1,\ldots,\boldsymbol\beta_\ell)\in {\bar C}$, $\boldsymbol\beta_i\in C$ there exists an $x\in (X\cap B)^\ell$ mapping $\alpha$ to $\beta$ since $X\cap B$ is transitive on $C$. Hence $\bar X$ is transitive on $\bar C$. Given any two neighbours $\mu,\nu\in \varGamma_1(\alpha)$, where $\mu,\nu$ differ from $\alpha$ in the respective blocks $J_i$ and $J_j$, we can map $J_j$ to $J_i$ via some element $\sigma\in S_\ell$. Then, since $X_{\boldsymbol\alpha_i}$ is transitive on $\varGamma_1(\boldsymbol\alpha_i)$, there exists an element $x\in \Diag_\ell(X)$ such that $\pi_{J_i}(\nu^{\sigma x})=\pi_{J_i}(\mu)$. We can then map $\nu^{\sigma x}$ to $\mu$ via some element $h\in (X\cap B)^\ell$, where $\chi_{J_i}(h)=1$, since each $\pi_{J_t}(\nu^{\sigma x})$ and $\pi_{J_t}(\mu)$ are elements of $C$ for $t\neq i$ and $X\cap B$ is transitive on $C$. Hence $\sigma x h$ maps $\nu$ to $\mu$ and $\bar X$ is transitive on ${\bar C}_1$.
 
 When we consider the projection $\pi_J(\bar C)$ for any $J\in\J'$ we are left with the complete code. To see this, consider that for $(\boldsymbol\alpha_1,\ldots,\boldsymbol\alpha_\ell)\in {\bar C}$, $\boldsymbol\alpha_i\in C$, we may choose an arbitrary element of $C$ as $\boldsymbol\alpha_i$ for each $i$. Since $X_1^Q$ is $2$-transitive on $Q$, each element appears in the first entry for some codeword. Thus, as $\pi_J((\boldsymbol\alpha_1,\ldots,\boldsymbol\alpha_\ell))$ when $J=\{1,k+1,\ldots,m-k+1\}$ is the first entry of each $\boldsymbol\alpha_i$, we have that $\pi_J(\bar C)$ is the complete code.
\end{example}

\section{Alphabet-almost-simple \texorpdfstring{$(X,2)$}{(X,2)}-neighbour transitive codes}\label{aasclassification}

Before we prove the final results we define the codes used in this section, which first requires the following definition.

\begin{definition}\label{defnum}
 Define the \emph{composition} of a vertex $\alpha\in H(m,q)$ to be the set $$Q(\alpha)=\{(a_1,p_1),\ldots,(a_q,p_q)\},$$ where $p_i$ is the number of entries of $\alpha$ which take the value $a_i \in Q$. For $\alpha\in H(m,q)$ define the set $$\Num(\alpha)=\{(p_1,s_1),\ldots,(p_j,s_j)\},$$ where $(p_i,s_i)$ means that $s_i$ distinct elements of $Q$ appear precisely $p_i$ times in $\alpha$. 
\end{definition}

\begin{definition}\label{diagntcodes}
We define the following codes:
\begin{enumerate}
 \item $\Inj(m,q)$, where $m<q$, is the set of all vertices $\alpha\in H(m,q)$ such that $\Num(\alpha)=\{(1,m)\}$;
 \item for $m$ odd, $W([m/2],2)$ is the set of vertices in $\alpha\in H(m,2)$ such that $\Num(\alpha)=\{(m+1)/2,1),(m-1)/2,1)\}$; and,
 \item $\All(pq,q)$, where $pq=m$, is the set of all vertices $\alpha\in H(m,q)$ such that $\Num(\alpha)=\{(p,q)\}$. 
\end{enumerate}
\end{definition}
For more information on these codes see \cite[Definition 2]{gillespiediadntc}. The following lemma is \cite[Lemma~4]{gillespiediadntc}. 

\begin{lemma}\label{numpres}
For any vertex $\alpha$ of $H(m,q)$, $\Num(\alpha)$ is preserved by $\Diag_m(S_q)\rtimes L$. 
\end{lemma}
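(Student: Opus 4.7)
The plan is to verify the claim directly from the definitions by checking that each of the two types of generators of $\Diag_m(S_q) \rtimes L$ preserves $\Num(\alpha)$. Any element of this group has the form $g\sigma$ with $g = (h,\ldots,h) \in \Diag_m(S_q)$ for some $h \in S_q$ and $\sigma \in L$, so it suffices to treat these two cases separately.

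First I would dispose of the top-group element $\sigma \in L$. From the action $\alpha^\sigma = (\alpha_{1\sigma^{-1}},\ldots,\alpha_{m\sigma^{-1}})$ recalled in Section~\ref{prelim}, $\sigma$ merely permutes which coordinates hold which alphabet symbol. For every $a \in Q$, the number of entries of $\alpha^\sigma$ equal to $a$ is identical to the number of entries of $\alpha$ equal to $a$. Hence the full composition $Q(\alpha^\sigma) = Q(\alpha)$, and \emph{a fortiori} $\Num(\alpha^\sigma) = \Num(\alpha)$.

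Next I would handle the diagonal base-group element $g = (h,\ldots,h)$. Here $\alpha^g = (\alpha_1^h,\ldots,\alpha_m^h)$, so every entry of $\alpha$ is transformed by the same permutation $h$ of $Q$. If $a \in Q$ occurs in exactly $p$ positions of $\alpha$, then $a^h$ occurs in those same $p$ positions of $\alpha^g$; since $h$ is a bijection on $Q$, this gives $Q(\alpha^g) = \{(a_1^h, p_1),\ldots,(a_q^h, p_q)\}$. The composition itself is relabelled by $h$, but the multiset of multiplicities $\{p_1,\ldots,p_q\}$ is preserved setwise, which is exactly the data recorded by $\Num$. Hence $\Num(\alpha^g) = \Num(\alpha)$.

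Combining the two cases shows $\Num(\alpha^{g\sigma}) = \Num(\alpha)$ for every $g\sigma \in \Diag_m(S_q) \rtimes L$. I do not anticipate any obstacle: the result is essentially a bookkeeping exercise, and the key conceptual point is that $\Num$ deliberately forgets the identity of the alphabet symbols (unlike $Q(\alpha)$), which is precisely what lets the diagonal action of $S_q$ preserve it.
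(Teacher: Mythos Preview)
Your argument is correct: treating the diagonal base element and the top-group element separately and checking that each preserves $\Num(\alpha)$ is exactly the right approach, and both verifications are sound. Note, however, that the present paper does not supply its own proof of this lemma; it simply quotes it as \cite[Lemma~4]{gillespiediadntc}, so there is no in-paper argument to compare against --- your direct verification is the natural (and presumably the cited) proof.
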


The last result, when combined with the classification of diagonally neighbour transitive codes \cite[Theorem~4.3]{gillespiediadntc}, allows us to prove the next result. 

\begin{proposition}\label{diag2nt}
 Let $C$ be a diagonally $(X,2)$-neighbour transitive code in $H(m,q)$. Then one of the following holds:
 \begin{enumerate}
  \item $q=2$ and $C=\{(a,\ldots,a)\}$;
  \item $m=3$ or $q=2$, and $C=\Rep(m,q)$;
  \item $C=\Inj(3,q)$;
  \item $m$ is odd and $C=W([m/2],2)$; or,
  \item $q=2$ or $q=m=3$, and there exists some $p$ such that $m=pq$ and $C$ is a subset of $\All(pq,q)$. 
 \end{enumerate}
\end{proposition}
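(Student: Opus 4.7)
The plan is to invoke the classification of diagonally $X$-neighbour transitive codes from \cite[Theorem~4.3]{gillespiediadntc}, which applies because every $(X,2)$-neighbour transitive code is in particular $(X,1)$-neighbour transitive. That theorem presents $C$, up to equivalence, as one of: a single vertex over a binary alphabet, $\Rep(m,q)$, $\Inj(3,q)$, $W([m/2],2)$ for $m$ odd, or a subset of $\All(pq,q)$ with $m=pq$. For each possibility I would determine the additional conditions on $m$ and $q$ required to upgrade to $(X,2)$-neighbour transitivity.

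The fundamental tool is Lemma~\ref{numpres}: since $\Num$ is preserved by $\Diag_m(S_q)\rtimes L$ and hence by $X$, transitivity on $C_2$ forces $\Num$ to be constant on $C_2$. This gives a direct obstruction that rules out most parameter choices. For $\Rep(m,q)$ with $m\geq 4$ and $q\geq 3$ the vertices $(1,\ldots,1,2,2)$ and $(1,\ldots,1,2,3)$ both lie in $C_2$ (at distance $2$ from $(1,\ldots,1)$ and at distance $\geq m-1\geq 3$ from every other codeword) but have distinct $\Num$-values $\{(m-2,1),(2,1)\}$ and $\{(m-2,1),(1,2)\}$; thus $m=3$ or $q=2$. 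For $\Inj(3,q)$, direct inspection shows $C_2=\{(a,\ldots,a)\mid a\in Q\}$, a single $\Num$-class. For $W([m/2],2)$ with $m$ odd, $C_2$ consists of vertices of weights $(m\pm 5)/2$, which are complementary in $H(m,2)$ and thus lie in one $\Num$-class since the diagonal group can swap $0$ and $1$.

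The subtle case, and the main obstacle, is $C\subseteq\All(pq,q)$, because $C$ need not equal $\All(pq,q)$ and the witnesses must be shown to lie at distance at least $2$ from \emph{every} codeword of $C$. Fix $\alpha\in C$, in which each of the $q$ symbols appears $p$ times. Assuming $p\geq 2$ and $q\geq 3$, consider the two distance-$2$ vertices obtained from $\alpha$ by (i) replacing two copies of a symbol $a$ with a single other symbol $b$, and (ii) replacing two copies of $a$ with one $b$ and one $c$ where $b\neq c$. Their $a$-coordinate counts are both $p-2$, while any codeword of $\All(pq,q)\supseteq C$ has $a$-count exactly $p$; this discrepancy of $2$ forces the distance to every codeword to be at least $2$, placing both witnesses in $C_2$. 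Their $\Num$-values differ (the multiplicity at $p+1$ is $0$ in the first and $2$ in the second), a contradiction. Hence $q=2$ or $p=1$. In the remaining sub-case $p=1$ we have $m=q$, and when $q\geq 4$ an analogous comparison of vertices of type $(1,1,2,2,\ldots)$ and $(1,1,1,2,\ldots)$ (with the remaining entries chosen to match a fixed codeword) again produces distinct $\Num$-classes in $C_2$, forcing $q=m=3$. Combining these reductions yields the five cases stated in the proposition.
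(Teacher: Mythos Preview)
Your overall strategy is the same as the paper's: start from the classification of diagonally neighbour-transitive codes in \cite[Theorem~4.3]{gillespiediadntc} and use the $\Num$-invariant (Lemma~\ref{numpres}) to eliminate the unwanted parameter ranges. However, there is a genuine gap arising from a misstatement of that classification. Theorem~4.3 of \cite{gillespiediadntc} does \emph{not} restrict the single-vertex case to $q=2$, nor does it restrict $\Inj(m,q)$ to $m=3$; it gives $\{(a,\ldots,a)\}$ for arbitrary $q$, and $\Inj(m,q)$ for all $m<q$. Consequently your argument omits two families that must be ruled out: $\{(a,\ldots,a)\}$ with $q\geq 3$, and $\Inj(m,q)$ with $4\leq m<q$. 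The paper handles these with exactly the tool you are already using, exhibiting pairs in $C_2$ with distinct $\Num$: for the singleton code take $(b,b,a,\ldots,a)$ versus $(b,c,a,\ldots,a)$, and for $\Inj(m,q)$ with $m\geq 4$ take $(1,1,1,4,5,\ldots,m)$ versus $(1,1,3,3,5,\ldots,m)$. The paper also disposes of $\Inj(2,q)$ separately by noting that $C_2$ is empty.

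A smaller point: for $\Rep(m,q)$ your witnesses require $m\geq 4$, and you then conclude ``$m=3$ or $q=2$''. You should also note that $m=2$ drops out because $\Rep(2,q)$ has covering radius $1$. Your treatment of the $\All(pq,q)$ case is essentially correct and parallel to the paper's, though your specific witnesses differ (you change two copies of one symbol, whereas the paper modifies one copy of each of two symbols); either choice works. Finally, for the stated proposition you need only the necessary direction, so you are not obliged to verify that the surviving cases are genuinely $(X,2)$-neighbour transitive, although the paper does include that verification.
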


\begin{proof}
 By \cite[Theorem~4.3]{gillespiediadntc}, a diagonally neighbour transitive code $C$ is one of: $\{(a,\ldots,a)\}$ for some $a\in Q$, $\Rep(m,q)$, $\Inj(m,q)$ with $m<q$, $W([m/2],2)$ with $m$ odd, or there exists a $p$ such that $m=pq$ and $C$ is a subset of $\All(pq,q)$. Here we consider $m\geq 2$, since if $m=1$ then $C_2$ is empty, so $C$ is not $(X,2)$-neighbour transitive. Also to prove some $C$ is $(X,2)$-neighbour transitive, we need only find some $X\leq \Aut(C)$ such that $X\leq \Diag_m(S_q)\rtimes L$ and $X$ is transitive on $C_2$, since $C$ is already $X$-neighbour transitive, for some $X$, by \cite[Theorem~4.3]{gillespiediadntc}.
 
 First, if $C=\Inj(2,q)$ then $C_2$ is empty. Thus, $C$ is not $(X,2)$-neighbour transitive. Table~\ref{musandnus} lists the remaining cases which are not $2$-neighbour transitive. The second and third columns give a pair $\mu,\nu\in C_2$ such that $\Num(\mu)\neq\Num(\nu)$.  Hence, by Lemma~\ref{numpres}, $X$ is not transitive on $C_2$. It can be deduced from $\Num(\mu),\Num(\nu)$ that $\mu,\nu\in C_2$, since this makes it clear that we must change $\mu,\nu$ in at least two entries to get a vertex in $C$. Note that we let $\alpha=(1,2,3,\ldots,q)\in H(q,q)$ and in the second last and last rows we assume $\alpha\in C$ and $(\alpha,\ldots,\alpha)\in C$, respectively, and observe for the last row $\hat\mu=(1,1,1,4,5,\ldots,q)$, $\hat\nu=(1,1,3,4,5,\ldots,q)$ are in $\Gamma_2(\alpha)$.
 
\noindent% Ensures there's no paragraph indent
\begin{table}[ht]
  \begin{center}% Centers the contents of the table
   \begin{tabular}{|ll|ll|ll|}
   \hline
   \multicolumn{2}{|l|}{$C$} &   \multicolumn{2}{|l|}{$\mu\in C_2$} & \multicolumn{2}{|l|}{$\nu\in C_2$}\\
   \multicolumn{2}{|r|}{\quad\qquad Conditions} & \multicolumn{2}{|r|}{$\Num(\mu)$} & \multicolumn{2}{|r|}{$\Num(\nu)$}\\
   \hline\hline
   \multicolumn{2}{|l|}{$\{(a,\ldots,a)\}$} &  \multicolumn{2}{|l|}{$(b,b,a,\ldots,a)$} & \multicolumn{2}{|l|}{$(b,c,a,\ldots,a)$} \\
   \multicolumn{2}{|r|}{$q\geq 3$} & \multicolumn{2}{|r|}{$\{(m-2,1),(2,1)\}$} & \multicolumn{2}{|r|}{$\{(m-2,1),(1,2)\}$}\\
   \hline
   \multicolumn{2}{|l|}{$\Rep(m,q)$} & \multicolumn{2}{|l|}{$(2,2,1,\ldots,1)$} & \multicolumn{2}{|l|}{$(2,3,1,\ldots,1)$}\\
    \multicolumn{2}{|r|}{$m >q\geq 3$} & \multicolumn{2}{|r|}{$\{(m-2,1),(2,1)\}$} & \multicolumn{2}{|r|}{$\{(m-2,1),(1,2)\}$}\\
   \hline
   \multicolumn{2}{|l|}{$\Inj(m,q)$} & \multicolumn{2}{|l|}{$(1,1,1,4,5,\ldots,m)$} & \multicolumn{2}{|l|}{$(1,1,3,3,5,6,\ldots,m)$}\\
   \multicolumn{2}{|r|}{$m\geq 4$} & \multicolumn{2}{|r|}{$\{(3,1),(1,m-3)\}$} & \multicolumn{2}{|r|}{$\{(2,2),(1,m-4)\}$}\\
   \hline
   \multicolumn{2}{|l|}{$\subseteq \All(q,q)$} & \multicolumn{2}{|l|}{$(1,1,1,4,5,\ldots,q)$} & \multicolumn{2}{|l|}{$(1,1,3,3,5,6,\ldots,q)$}\\
   \multicolumn{2}{|r|}{$q\geq 4$} & \multicolumn{2}{|r|}{$\{(3,1),(1,q-3)\}$} & \multicolumn{2}{|r|}{$\{(2,2),(1,q-4)\}$}\\
   \hline
   \multicolumn{2}{|l|}{$\subseteq\All(pq,q)$} & \multicolumn{2}{|l|}{$(\hat\mu,\alpha,\ldots,\alpha)$} & \multicolumn{2}{|l|}{$(\hat\nu,\hat\nu,\alpha,\ldots,\alpha)$}\\
   \multicolumn{2}{|r|}{$q>p\geq 2$} & \multicolumn{2}{|r|}{\qquad\qquad$\{(p-1,2),(p,q-3),(p+2,1)\}$} & \multicolumn{2}{|r|}{\qquad\qquad$\{(p-2,1),(p,q-2),(p+2,1)\}$}\\
   \hline
  \end{tabular}
  \caption{\small Diagonally neighbour transitive codes $C$ which are not diagonally $2$-neighbour transitive, and elements of $C_2$ which illustrate this. Note: $\hat\mu=(1,1,1,4,5,\ldots,q)$, $\hat\nu=(1,1,3,4,5,\ldots,q)$ and $\alpha=(1,2,3,\ldots,q)$.} 
  \label{musandnus}
 \end{center}
\end{table}

 Now we prove the result for the cases which are $2$-neighbour transitive. Suppose $C=\{(a,\ldots,a)\}$ for some $a\in Q$. Let $q=2$ and $Q=\{0,1\}$. Then $L=S_m=\Aut(C)$. Without loss of generality, let $a=0$ so that $C_2$ is the set of weight two vertices. Since $L$ is transitive on the sets of weight $2$ and weight $1$ vertices, it follows $C$ is diagonally $(X,2)$-neighbour transitive. Let $C=\Rep(m,q)$. It follows from Example~\ref{k3} that $\Rep(3,q)$ is $(\Diag_3(S_q)\rtimes S_3,2)$-neighbour transitive. If $q=2$ then $\Aut(C)\cong \Diag_m(S_2)\rtimes S_m$ and $C$ is completely transitive \cite[Example~3.1]{Giudici1999647}. Consider $C=\Inj(m,q)$ with $3=m<q$ and $q\geq 4$. If $\nu\in C_2$ then $\nu_1=\nu_2=\nu_3$, since otherwise $\nu\in C$ or $C_1$. Since $\Diag_m( S_q)\leq \Aut(C)$, we are transitive on $C_2$. Suppose $C=W([m/2],2)$ and $m$ is odd. Then by \cite[Corollary~3.4]{gillespiediadntc} $C$ is $\Diag(S_2)\rtimes S_m$-completely transitive. Finally, suppose $C$ is a subset of $\All(pq,q)$ for some $p$ such that $m=pq$. Let $p\geq 2$, $q=2$ and $C=\All(2p,2)$. Then $C_2$ is the set of all weight $p\pm 2$ vertices, which $\Diag_2(S_2)\rtimes S_m\leq \Aut(C)$ is transitive on. Let $p=1$, $q=3$ and $C=\All(3,3)$. Then $C_2=\Rep(3,q)$ and is $\Aut(C)$-completely transitive by Example~\ref{k3}. 
\end{proof}

With our classification of diagonally $(X,2)$-neighbour transitive codes from the previous result, Propositions~\ref{partimpdiag} and \ref{partx2nt} mean we are now in a position to prove the main theorem.

\begin{proof}[Proof of Theorem~\ref{aasmain}]
 Suppose $C$ is an alphabet-almost-simple $(X,2)$-neighbour transitive code with $\delta\geq 3$ such that $X\cap B\neq 1$. By Proposition~\ref{partimpdiag}, there exists an $X$-invariant partition $\J=\{J_1,\ldots,J_\ell\}$, for some $\ell$, for the action of $X$ on $M$. Moroever, $\pi_{J_i}(C)$ has minimum distance at least $2$ and is diagonally $\chi_{J_i}(X)$-neighbour transitive. By Proposition~\ref{partx2nt}, either $\pi_{J_i}(C)$ has covering radius $\rho\leq 1$, or $\pi_{J_i}(C)$ is also $(\chi_{J_i}(X),2)$-neighbour transitive. Note $\rho\neq 0$, that is, $\pi_{J_i}(C)$ is not the complete code, since $\pi_{J_i}(C)$ has minimum distance at least $2$.
 
 Suppose $\pi_{J_i}(C)$ has covering radius $\rho\geq 2$. Since $X_1^Q$ is almost-simple, it follows that $q\geq 5$. By Proposition~\ref{diag2nt}, the only diagonally $2$-neighbour transitive code with $q\geq 5$ and $\delta\geq 2$ is $\Rep(3,q)$ for $q\geq 5$ (note that $\delta=1$ for $\Inj(3,q)$). Then Lemma~\ref{repnopart} implies $\J$ is a trivial partition. Since $|J_i|=k=3>1$, it follows that $\ell=1$, $k=m$, and $C=\Rep(3,q)$.
 
 Suppose $\pi_{J_i}(C)$ has covering radius $\rho=1$. By \cite[Thm.~4 and Cor.~2]{gillespiediadntc}, the only diagonally neighbour transitive code with $\delta\geq 2$ and $\rho=1$ is $\Rep(2,q)$. If $l=1$ then $\delta=2$, a contradiction. Suppose $l\geq 2$. Then Lemma~\ref{repnopart} implies $\delta=2$, a contradiction.
\end{proof}

\end{document}